\numberwithin{equation}{section}
\newtheorem{lemma}{Lemma}[section]
\newtheorem{theorem}[lemma]{Theorem}
\newtheorem{rem}[lemma]{Remark}
\newtheorem*{special theorem}{My Specially-Named Theorem}
\newcommand{\comment}[1]{}
\begin{document}

% this starts the page-numbering at the bottom-center with roman numerals,
% like you're supposed to for the beginning pages

\pagestyle{plain}

% DOCUMENT INFO

\title{The quotients of the $p$-adic group ring of a cyclic  group of  order $p$}

\date{\today}

\author[b]{Maria Guedri$^{*}$}
\author[c]{Yassine Guerboussa$^{\dag}$}

\address[$^{*}$]{Department of Mathematics, Fr\`{e}res Mentouri University-Constantine 1, Ain El Bey road, 25017 Constantine. Algeria
 \\ {\tt Email: guedri\_maria@yahoo.fr}}
\address[$^{\dag}$]{Department of Mathematics, University Kasdi Merbah Ouargla, Ouargla, Algeria \\ {\tt Email: yassine\_guer@hotmail.fr}}
% TITLE PAGE
\maketitle

\begin{abstract}
We classify, up to isomorphism, the  $\mathbb{Z}_pG$-modules of rank $1$ (i.e., the quotients of $\mathbb{Z}_pG$) for $G$  cyclic of order $p$, where $\mathbb{Z}_p$ is the ring of $p$-adic integers.  This allows us in particular to determine effectively the quotients of $\mathbb{Z}_pG$ which are cohomologically  trivial over $G$. There are natural zeta functions associated  to $\mathbb{Z}_pG$ for which we give an explicit formula. 

\end{abstract}\vspace{1cm}
\footnotesize
	Keywords: finite $p$-groups  , group cohomology, zeta functions.
	
%	MSC : 20D15 , 20J06
	%% or \MSC[2008] code \sep code (2000 is the default)
	
\normalsize

 \section{Introduction}
 The results in this note have arisen from  the investigation of \emph{cohomologically trivial} (\emph{CT} for short) modules over finite $p$-groups.   The interest in the latter has been motivated in recent years  by a conjecture of Schmid \cite[p.3]{Schmid} stating that:
\textit{for every finite non-abelian $p$-group $P$, the center of the Frattini subgroup $Z(\Phi(P))$  is not CT over $P/\Phi(P)$}.   Although the latter holds  for several families 
 of $p$-groups (see \cite{Schmid, AliPowerfulCT, Benmoussa, AMY}),  it fails to hold in general as is shown in Abdollahi \cite{AliCT}; more precisely, counter-examples are provided by the $2$-groups of order $2^8$ with {\sf IdSmallGroup} $298,\dots,307$  in  GAP \cite{GAP}.  The interesting issue here is that such counter-examples are rare; e.g., none of them occur among the $2$-groups of order $2^9$, noting that there are $10494213$ groups of the latter order facing $56092$ ones of order $2^8$.  Hence an in-depth analysis is needed  for a better understanding of such counter-examples,  or, more generally, of the finite CT modules (over finite $p$-groups).

For a finite  $p$-group $G$, every  $G$-module $A$ that is finite of $p$-power order  can be viewed in a canonical way as  a $\mathbb{Z}_pG$-module  (where $\mathbb{Z}_p$ is the ring of $p$-adic integers).  In order to deal with presentations of such $A$'s  we may work more generally within the category of finitely generated   $\mathbb{Z}_pG$-modules.  Note that $\mathbb{Z}_pG$   is a local ring whose maximal ideal is given by 
$\mathfrak{m}=\mathfrak{I}_G+p\,\cdot 1$,  where $\mathfrak{I}_G$ is the augmentation ideal of $\mathbb{Z}_pG$.  This implies in particular, by using Nakayama's lemma, that, for every finitely generated $\mathbb{Z}_pG$-module $A$, all the minimal generating sets of $A$  have a common size equaling
\begin{equation}\label{Definition of the rank of A}
r(A):=\dim_{\mathbb{F}_p} A/\mathfrak{m}A=\dim_{\mathbb{F}_p} A/[A,G]+pA.
\end{equation}
We may call $r(A)$ the \textit{$\mathbb{Z}_pG$-rank} of $A$ (or simply the \textit{rank} of $A$ if no confusion arises).   If we write $d(A)$ for
the minimal number of generators of $A$ over $\mathbb{Z}_p$, that is, $d(A)=\dim_{\mathbb{F}_p}A/pA$, we  obtain immediately 
\begin{equation}\label{Two formulas for the rank of A}
r(A)=d(A/[A,G]).
\end{equation}
(For a detailed discussion of such invariants see \cite[Sec. 2]{MY}.)

This note treats the simplest possible case, i.e., that where $G=\langle g \rangle$ is cyclic of order $p$ and $A$ is a $\mathbb{Z}_pG$-module of rank $1$.  Note that the cohomology groups $\hat{H}^*(G,A)$ can be treated in a fairly elementary way since
\begin{equation}\label{Cohomology groups}
\hat{H}^{2n}(G,A)=A^G/NA \quad \mbox{and } \quad \hat{H}^{2n+1}(G,A)=\, _{N}A/[A,G], 
\end{equation}
where $N=1+\cdots+g^{p-1}$ is the norm defined by $G$, and $_{N}A$ is the kernel of multiplication $N$ (cf. e.g., Serre \cite[Chap VIII, \S 4]{SerreLF}).  In this case, saying that $A$ is CT amounts to saying that $\hat{H}^n(G,A)=0$ for $n=0,1$.  Note that if $A$ is finite, the latter two groups are isomorphic (i.e., the Herbrand quotient equals $1$), and so for $A$ to be CT it suffices that $\hat{H}^n(G,A)=0$ for just one $n$. 

 Our primary intention was to get a classification (up to isomorphism) of all those $A$ which are CT, but it turned out that treating all the cases simultaneously requires  no  extra effort. 
 
  In the case where $A$ is infinite, we may consider the torsion part $$T(A)=\{a\in A\mid p^la=0 \mbox{ for some integer } l>0\}.$$
  Accordingly, we have the subdivision  $T(A)=0$ (i.e., $A$ is torsion-free over $\mathbb{Z}_p$) and $T(A)\neq 0$.  In the first subcase, we shall see that  $A\cong \mathbb{Z}_pG$,  $A\cong \mathfrak{I}_G$ (the augmentation ideal), or $A\cong \mathbb{Z}_p$ (a trivial $G$-module). In the second subcase, we have either $A\cong L/p^nL^G$ or 
  $A\cong L/(g-1)^n\mathfrak{I}_G$, where $L=\mathbb{Z}_pG$ and $p^n$ is the order of $T(A)$ (see Theorem \ref{Classification of the in finite case}).  Note that $A$ is CT here if and only if $A\cong \mathbb{Z}_pG$.

  The case where $A$ is finite---which is more interesting for us---needs more elaboration.  Roughly speaking, every such a   $A$ is  determined by three parameters $r$, $s$ and $t$, where 
$$p^r=\mbox{ order of } [A,G], \qquad p^s= \mbox{ order of } A^G,$$
and $t$ is an element of the finite field $ \mathbb{F}_p$  which satisfies a relation of the form
$$
p^sa=\varepsilon^s((g-1)^{s(p-1)}+t(g-1)^{r}) b,
$$
with $a$ denoting  a generator of $A$ over  $\mathbb{Z}_pG$, $b=(g-1)a$, and  $\varepsilon$ is a certain  operator on the augmentation ideal $\mathfrak{I}_G$.  The precise statement is given in Theorem \ref{Classification of cyclic modules}.  The fact that the parameter $t\in \mathbb{F}_p$ is well defined (i.e., independent from the choice of the generator $a$) will be clear after Lemma \ref{Unicity of the invariant t}.  It is worth noting that the order of $A$ is determined by $r$ and $s$ (more precisely, $\vert A\vert=p^{r+s}$), and that the cohomological behavior   of $A$ is determined by $t$.

Writing $\hat{b}_n$ for the number of the $\mathbb{Z}_pG$-modules of rank $1$ and order  $p^n$,  and $\hat{c}_n$ for the  number of the latter which are in addition  CT, it follows by Theorem \ref{Classification of cyclic modules} that  (cf. Formulae (\ref{Number of finite quotient and finite CT quotients over G cyclic of order p}) and (\ref{Number of finite CT quotients over G cyclic of order p}))
$$\hat{b}_n =1+p(n-1) \quad \mbox{and} \quad \hat{c}_n =(p-1)(n-1),$$
for all $n\geq 2$.  Subsequently the density of CT modules within $\mathbb{Z}_pG$ is given  by
\begin{equation}\label{Density of CT modules}
\lim_n \dfrac{\hat{c}_n}{\hat{b}_n}  =1-p^{-1}.
\end{equation}
It seems likely that similar formulae hold for the finite quotients of $L=(\mathbb{Z}_pG)^d$, for every  integer $d>0$ and every finite $p$-group $G$.
 This question is closely related to studying the zeta functions:
\begin{equation}\label{Defn of zeta functions}
Z(L;s)=\sum_{n\geq 1}b_n(L)p^{-ns}   \quad \mbox{and }\quad Z_{\text{coh}}(L;s)=\sum_{n\geq 1}c_n(L)p^{-ns},
\end{equation}
where $b_n(L)$  denotes the number of  all $\mathbb{Z}_pG$-submodules of $L$ of index $p^n$, and $c_n(L)$   the number of the latter which are in addition free.  It is worth mentioning   that a $\mathbb{Z}_pG$-submodule $M\subseteq L$ is free if and only if  $L/M$ is CT over $G$; this follows, e.g., from \cite[Chap. IX, Thm. 7]{SerreLF} together with the fact that $\mathbb{Z}_pG$ is local; see  \cite[Sec. 3]{MY} for  more details.

In the case $L=\mathbb{Z}_pG$ with $G$ having order $p$,  Theorem \ref{Classification of cyclic modules} (i) implies  that $b_n(L) =\hat{b}_n$ and $c_n(L)=c_n$ for all $n$, from which it is readily seen that
\begin{equation}\label{Explicit Zetafunction 1}
Z(L;s)=\dfrac{1-p^{-s}+p^{1-2s}}{(1-p^{-s})^2},
\end{equation}
and
\begin{equation}\label{Explicit Zetafunction 2}
Z_{coh}(L;s)= \dfrac{(1-p)p^{-2s}}{(1-p^{-s})^2}.
\end{equation}
Note that (\ref{Explicit Zetafunction 1}) has been already established by other methods in Solomon \cite{Solomon},  Reiner \cite{Reiner1980}, and Bushnell-Reiner \cite{Bushnell and Reiner}.

\begin{rem}
\textnormal{ The zeta functions (\ref{Defn of zeta functions}) are analogous to the ones studied in the context of subgroup growth; see Lubotzky and Segal \cite{LuboSegal}. In fact, our interest in (\ref{Defn of zeta functions})   is largely influenced  by the philosophy  presented in the latter monograph.  	The natural question that arises when considering such  series is  whether they are given by rational functions in $p^{-s}$.  The second author has already posted this question on MathOverflow \cite{MO}; shortly after, we realized that the zeta functions (\ref{Defn of zeta functions})   had been  already introduced in Solomon \cite{Solomon}, who proved in particular their rationality (using combinatorial methods).  It is worth mentioning that Solomon's work predates the appearance of the seminal paper \cite{GrunSegalSmith} in which 
	the subgroup zeta functions were first considered.  Solomon's work was followed up in Bushnell and Reiner \cite{Bushnell and Reiner}, who gave another proof of the rationality following Tate's thesis; for another approach  (using Iguza's zeta functions) we refer to  \cite{MY}.}
\end{rem}

About the notation, notice that $A^G$ denotes here  the submodule of the $G$-fixed elements in $A$ (unlike   \cite{AliCT, AMY, Schmid} where this was denoted by $A_G$).  The remaining notation  need no explanation.   In Sec. 2, we treat the finite case, and prove in particular Theorem \ref{Classification of cyclic modules}.  The last section treats the infinite case, i.e., Theorem \ref{Classification of the in finite case} and its proof.

The arguments presented in this note might be extended to more general $p$-adic rings (instead of $\mathbb{Z}_p$), i.e., valuation rings of finite extensions of the field $\mathbb{Q}_p$.

\section{The finite case}\label{Cyclic module over cyclic groups of order p}
Throughout this section we suppose that $G=\langle g\rangle$ is cyclic of order $p$ and that  $A$ is a finite $\mathbb{Z}_pG$-module of rank $1$, that is, $r(A)=1$.   To ease the notation,  we write   $B$ for $[A,G]$, and    $B_j$ for $[B,G,\ldots,G]$, where $G$ appears $j$ times.   It is readily seen that  $B_j=(g-1)^jB$, for all $j\geq 0$ (cf. \cite[Sec. 3]{BG}). Set
$$p^r=\vert B\vert \quad \mbox{ and }\quad  p^s=\vert A^G\vert.$$
Observe that multiplication by $g-1$ induces a $G$-isomorphism $A/A^G\cong B$, and so $\vert A\vert =p^{r+s}$.

The fact that $B$ is annihilated by the norm $N=1+\cdots+g^{p-1}$ implies that $B$ has a natural structure of a module over $\mathfrak{O}=\mathbb{Z}_p[\zeta]$, where $\zeta$ is a primitive $p$th root of $1$,  and $\zeta$ acts on $B$ as $g$ (cf. \cite[Remark 1.1]{BG}).  Hence, for all $j\geq 1$, we have
\begin{equation}\label{Central series of B as an O-module}
B_j=(\zeta-1)^jB=\mathfrak{P}^jB,
\end{equation}
where $\mathfrak{P}=(\zeta-1)\mathfrak{O}$ is the unique maximal ideal of $\mathfrak{O}$.  

Let $a$ be a generator of $A$, and let $b=(g-1)a$.  It follows by (\ref{Two formulas for the rank of A}) that $A/B$ is a cyclic $p$-group  generated by the canonical image of $a$, and so $A=\langle a\rangle+B$.  Multiplying the latter by $g-1$ yields $B=\langle b\rangle+B_1$, which implies that $B$ is a cyclic $\mathfrak{O}$-module.  Next, as $\mathfrak{O}$ is a discrete valuation ring, we obtain
\begin{equation}\label{B as an O-module}
B\cong \mathfrak{O}/\mathfrak{P}^{r} \quad \mbox{and } \quad B_j\cong
\mathfrak{P}^{\min\{j,r\}}/\mathfrak{P}^{r}.
\end{equation}
In particular, $r$ is the smallest integer such that $(g-1)^rB=0$.  Also, as $A/B$ has order $p^s$ (because $\vert A\vert=\vert B\vert\vert A^G\vert$), we have \begin{equation}\label{Structure of the quotient A/B}
A/B\cong \langle a\rangle/\langle a\rangle\cap B\cong \mathbb{Z}/(p^s).
\end{equation}
It follows that $p^s$ is the smallest integer such that $p^sa\in B$ (that is, $p^sa$ is a generator of the cyclic $p$-group
$\langle a\rangle\cap B$).  

Observe that the case $s=0$ occurs exactly when  $A^G=0$, or equivalently $A=0$. The case  $r=0$ corresponds to the fact that $A$ is a cyclic $p$-group of order $p^s$ on which $G$ acts trivially.

We may suppose in the sequel that $s,r\geq 1$.  Using  (\ref{B as an O-module}), we see that there exists a polynomial $f(\zeta)\in \mathfrak{O}$,  unique modulo $\mathfrak{P}^r$, such that
\begin{equation}\label{The polynomial characterizing p^sa}
p^sa=f(\zeta) b.
\end{equation}
Before giving an explicit formula for  $f(\zeta)$, recall  
 the well-known fact that there is a unit $\varepsilon\in \mathfrak{O}^{\times}$  such that $p=\varepsilon(\zeta-1)^{p-1}$. The latter implies in particular  that
\begin{equation}\label{Power and commutator in B}
p^jc=\varepsilon^j (\zeta-1)^{j(p-1)}c
\end{equation}
for all  integers $j\geq 1$ and all $c\in B$.
\begin{lemma}\label{Formula for the polynomial characterizing p^sa}
	Suppose that $r,s\geq 1$.  Then
	there exists a unique element $t\in \mathbb{F}_p$  such that
	$$f(\zeta)=\varepsilon^s(\zeta-1)^{s(p-1)-1}+t(\zeta-1)^{r-1} \mod \mathfrak{P}^r.$$
	
\end{lemma}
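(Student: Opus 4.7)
The plan is to apply $g-1$ to the defining relation (\ref{The polynomial characterizing p^sa}) and exploit the fact that $B$ is a cyclic module over the discrete valuation ring $\mathfrak{O}$.

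Starting from $p^s a = f(\zeta) b$, I would multiply both sides by $g-1$. On the left, $p^s(g-1)a = p^s b$, which by (\ref{Power and commutator in B}) applied with $c=b$ and $j=s$ rewrites as $\varepsilon^s(\zeta-1)^{s(p-1)} b$. On the right, since $b \in B$ and $g-1$ acts on $B$ as $\zeta - 1$, I get $(\zeta - 1) f(\zeta) b$. Under the isomorphism $B \cong \mathfrak{O}/\mathfrak{P}^r$ from (\ref{B as an O-module}), the equality of these two expressions in $B$ translates into the congruence
$$(\zeta - 1) f(\zeta) \equiv \varepsilon^s (\zeta - 1)^{s(p-1)} \mod \mathfrak{P}^r.$$

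Next I would cancel the factor $\zeta - 1$, using that $\mathfrak{O}$ is a DVR with uniformizer $\zeta - 1$; this drops the modulus by one power and yields
$$f(\zeta) \equiv \varepsilon^s (\zeta - 1)^{s(p-1)-1} \mod \mathfrak{P}^{r-1}.$$
Consequently $f(\zeta) - \varepsilon^s(\zeta-1)^{s(p-1)-1}$ represents, modulo $\mathfrak{P}^r$, an element of $\mathfrak{P}^{r-1}/\mathfrak{P}^r$. The latter is a one-dimensional $\mathbb{F}_p$-vector space spanned by the class of $(\zeta-1)^{r-1}$, so this difference can be written as $t(\zeta-1)^{r-1}$ for a unique $t \in \mathbb{F}_p$, giving the desired form. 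Uniqueness of $t$ follows from the uniqueness of $f(\zeta)$ modulo $\mathfrak{P}^r$ recorded just after (\ref{The polynomial characterizing p^sa}).

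There is no substantive obstacle; the only step deserving a moment's care is the cancellation of $\zeta - 1$, which is legitimate because $\mathfrak{O}$ is a DVR and is precisely what forces the modulus to drop from $\mathfrak{P}^r$ to $\mathfrak{P}^{r-1}$. One should also note in passing that the exponent $s(p-1)-1$ is nonnegative under the standing hypotheses $s \geq 1$ and $p \geq 2$, so the monomial $(\zeta-1)^{s(p-1)-1}$ is well defined.
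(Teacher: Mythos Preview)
Your argument is correct and follows essentially the same route as the paper's own proof: multiply (\ref{The polynomial characterizing p^sa}) by $g-1$, use (\ref{Power and commutator in B}) and the isomorphism $B\cong \mathfrak{O}/\mathfrak{P}^r$ to obtain a congruence in $\mathfrak{O}$, and then cancel the factor $\zeta-1$ (the paper phrases this as cancelling in the domain $\mathfrak{O}$, you as dropping the modulus from $\mathfrak{P}^r$ to $\mathfrak{P}^{r-1}$, which is the same thing). Your extraction of $t$ via $\mathfrak{P}^{r-1}/\mathfrak{P}^r\cong\mathbb{F}_p$ and your uniqueness argument from the uniqueness of $f(\zeta)\bmod\mathfrak{P}^r$ are in fact slightly more streamlined than the paper's version.
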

\begin{proof}
	Multiplying the equation (\ref{The polynomial characterizing p^sa}) by $g-1$ yields
	$$p^sb=(\zeta-1)f(\zeta)b,$$
	from which we get by   (\ref{Power and commutator in B})
	$$\left[ \varepsilon^s(\zeta-1)^{s(p-1)}-(\zeta-1)f(\zeta)\right ]b=0.$$
Using (\ref{B as an O-module}), we infer
\begin{equation}\label{Aux Eq}
(\zeta-1)\left [f(\zeta)-\varepsilon^s(\zeta-1)^{s(p-1)-1}\right]=t(\zeta-1)^{r}+o(\zeta)(\zeta-1)^{r+1}, 
\end{equation}
where $t\in \mathbb{F}_p$ and $o(\zeta) \in \mathfrak{O}$. The fact that $t$ can be chosen in $\mathbb{F}_p$ follows because  $\mathfrak{P}^{r}/\mathfrak{P}^{r+1}\cong\mathbb{F}_p$.  Such a $t$ is uniquely determined as if we have another $t'\in \mathbb{F}_p$ and another $o'(\zeta) \in \mathfrak{O}$ such that
$$t(\zeta-1)^{r}+o(\zeta)(\zeta-1)^{r+1}= t'(\zeta-1)^{r}+o'(\zeta)(\zeta-1)^{r+1},$$
then  
$$(t'-t)
(\zeta-1)^{r}=0 \mod \mathfrak{P}^{r+1},$$
and so $t'=t \mod p$.

Next, as $\mathfrak{O}$ is a domain,  we can simplify $(\zeta-1)$ from both sides of (\ref{Aux Eq}), so that
	$$f(\zeta)=\varepsilon^s(\zeta-1)^{s(p-1)-1}+t(\zeta-1)^{r-1}+o(\zeta) (\zeta-1)^{r},$$
which concludes the proof.
\end{proof}
The next lemma shows that the above polynomial $f(\zeta)$, or equivalently  $t\in \mathbb{F}_p$, is independent from the choice of the generator  $a$ of $A$.

\begin{lemma}\label{Unicity of the invariant t} Suppose that $r,s\geq 1$.   If  $a'$ is a generator  of $A$ and  $b'=(g-1)a'$, then $$p^sa'=f(\zeta)b'.$$
\end{lemma}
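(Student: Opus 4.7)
The plan is to exploit the fact that $G$ is abelian, hence $\mathbb{Z}_pG$ is a commutative ring; consequently, for every $\lambda\in \mathbb{Z}_pG$ the multiplication map $\mu_\lambda:A\to A$, $x\mapsto \lambda x$, is a $\mathbb{Z}_pG$-module endomorphism of $A$. This is the one feature of the situation that makes the whole argument essentially formal.

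Since $A=\mathbb{Z}_pG\cdot a$ is cyclic, any other element $a'\in A$ can be written as $a'=\lambda a$ for some $\lambda\in \mathbb{Z}_pG$. First I would observe, by commutativity of $\mathbb{Z}_pG$, that
$$b'=(g-1)a'=(g-1)\lambda a=\lambda(g-1)a=\lambda b.$$
Next I would apply $\mu_\lambda$ to the defining identity $p^sa=f(\zeta)b$ supplied by Lemma \ref{Formula for the polynomial characterizing p^sa}. The left-hand side becomes $\lambda p^sa=p^sa'$, and the right-hand side becomes $\lambda\bigl(f(\zeta)b\bigr)=f(\zeta)\lambda b=f(\zeta)b'$, using commutativity once more. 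Here I realize $f(\zeta)$ as $\tilde f(g)\in\mathbb{Z}_p[g]\subset \mathbb{Z}_pG$ for any polynomial lift $\tilde f$, so that it makes sense as an operator on all of $A$; the value $\tilde f(g)b$ is unambiguously $f(\zeta)b$ because $b\in B$ carries the $\mathfrak{O}$-module structure coming from $g\mapsto \zeta$ (cf.~(\ref{Central series of B as an O-module})).

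The main (and really only) subtlety I would be careful about is precisely this identification of $f(\zeta)$ acting on $B$ as an $\mathfrak{O}$-module with a polynomial in $g$ acting on $A$ as a $\mathbb{Z}_pG$-module; once that bookkeeping is set, the whole proof reduces to a one-line commutativity calculation. I would also point out that it is \emph{not} necessary to know that $\lambda$ is a unit of $\mathbb{Z}_pG$; the same manipulations go through for an arbitrary $\lambda$. Nevertheless, $\lambda$ is in fact a unit, by Nakayama's lemma applied to the local ring $\mathbb{Z}_pG$, since both $a$ and $a'$ are minimal generators of $A$.
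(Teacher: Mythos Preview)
Your argument is correct and is genuinely simpler than the paper's. The paper writes $a'=(k+\ell p)a+c$ with $0<k<p$ and $c\in B$, applies Lemma~\ref{Formula for the polynomial characterizing p^sa} separately to $a'$ to obtain a polynomial $f'(\zeta)=h(\zeta)+t'(\zeta-1)^{r-1}$ with its own parameter $t'$, expands both resulting expressions for $p^sa'$ using~(\ref{Power and commutator in B}), and compares coefficients to force $t=t'$. Your route bypasses all of this computation by exploiting commutativity of $\mathbb{Z}_pG$ directly: since the $\mathfrak{O}$-action on $B$ factors through the surjection $\mathbb{Z}_pG\twoheadrightarrow \mathbb{Z}_pG/(N)\cong\mathfrak{O}$, one may lift $f(\zeta)$ to some $\tilde f(g)\in\mathbb{Z}_pG$, and then the identity $p^sa=\tilde f(g)b$ is transported to $a'=\lambda a$ by the $\mathbb{Z}_pG$-endomorphism $\mu_\lambda$. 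In particular your argument never uses the explicit shape of $f$ found in Lemma~\ref{Formula for the polynomial characterizing p^sa}; it shows that \emph{any} $f$ satisfying~(\ref{The polynomial characterizing p^sa}) for one generator automatically satisfies it for every generator. The paper's explicit computation has the minor side benefit of re-verifying the shape of $f'$ along the way, but that is not needed for the lemma as stated.
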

\begin{proof}
	Set $h(\zeta)=\varepsilon^s(\zeta-1)^{s(p-1)-1}$. Arguing as in the previous lemma, we see that $p^sa'=f'(\zeta)b'$, where  $f'(\zeta)\in \mathfrak{O}$ has the form 
$$
f'(\zeta)=h(\zeta)+t'(\zeta-1)^{r-1} \mod \mathfrak{P}^r,
$$
for some $t'\in  \mathbb{F}_p$.   As $a'+B$ is a generator of the cyclic $p$-group $A/B$,  there exist integers 	$k$ and $l$,  with $0< k< p-1$, and $c\in B$ such that
\begin{equation}\label{Aux (*)}
a'=(k +l p)a+c. 
\end{equation}
Multiplying the latter by $p^s$ and using the previous lemma, we obtain
\begin{align*}
p^sa'&=(k +l p)p^sa+p^sc \\
&=(k+lp)h(\zeta)b+(k+lp)t(\zeta-1)^{r-1}b+ p^sc.
\end{align*}
Clearly, $p(\zeta-1)^{r-1}b=0$, so by (\ref{Power and commutator in B}) we have $p^sc=h(\zeta)(\zeta-1)c$. Therefore,
$$p^sa'=(k+lp)h(\zeta)b+kt(\zeta-1)^{r-1}b+ h(\zeta)(\zeta-1)c. $$
On the other hand, multiplying (\ref{Aux (*)})  by $(g-1)$ shows that
$$
b'=(k +l p)b+(\zeta-1)c. 
$$
Substituting the latter in   $p^sa'=f'(\zeta)b'$, knowing that $p(\zeta-1)^{r-1}b=0$ and $t'(\zeta-1)^{r}c=0$, we obtain
$$
p^sa'=(k+lp)h(\zeta)b+kt'(\zeta-1)^{r-1}b+ h(\zeta)(\zeta-1)c.$$
By comparing the above two expressions of $p^sa'$, we infer
$$k(t-t')(\zeta-1)^{r-1}b=0.$$
As $0<k<p$, the factor $t-t'$ should be divisible by $p$, that is, $t=t'$.  Thus $f'(\zeta)=f(\zeta)$, as desired.
\end{proof}

The above discussion shows that we have an invariant $t\in \mathbb{F}_p$ of $A$ defined whenever $r,s\geq 1$.  It is convenient to extend the definition of $t$ to include the case $rs=0$ by setting  $t=0$.

Before embarking on the main theorem, let us prove the following, which will be  useful in determining the cohomological type of $A$.  Recall that  $N=1+g+\ldots+g^{p-1}$.

\begin{lemma}\label{Cohomology type of A in terms of the norm}
Let $a$ be a generator of $A$, and let $b=(g-1)a$. Then for every integer $j\geq 1$ we have
	$$ Np^ja=p^{j+1}a-\varepsilon^{j+1} (\zeta-1)^{(j+1)(p-1)-1}b.$$
\end{lemma}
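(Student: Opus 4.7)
The plan is to reduce everything to a computation inside $B$, where the $\mathfrak{O}$-action is at hand, and then to exploit the vanishing $\zeta^p=1$ in $\mathfrak{O}$. First I would expand $g^i=(1+(g-1))^i$ via the binomial theorem and apply the hockey-stick identity $\sum_{i=k}^{p-1}\binom{i}{k}=\binom{p}{k+1}$ to rewrite
$$N=\sum_{k=0}^{p-1}\binom{p}{k+1}(g-1)^k$$
inside $\mathbb{Z}_pG$. Peeling off the $k=0$ term (which is just $p$) and factoring out $g-1$ yields
$$(N-p)a=\sum_{k=1}^{p-1}\binom{p}{k+1}(g-1)^{k-1}b,$$
an element that visibly lies in $B$.

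Passing to the $\mathfrak{O}$-action on $B$, so that $g-1$ acts as $\zeta-1$, the crux is the observation that the \emph{full} sum
$$\sum_{k=0}^{p-1}\binom{p}{k+1}(\zeta-1)^k=\frac{(\zeta-1+1)^p-1}{\zeta-1}=\frac{\zeta^p-1}{\zeta-1}$$
vanishes in $\mathfrak{O}$ because $\zeta^p=1$. Isolating the $k=0$ term and invoking $p=\varepsilon(\zeta-1)^{p-1}$, this forces
$$\sum_{k=1}^{p-1}\binom{p}{k+1}(\zeta-1)^{k-1}=-\frac{p}{\zeta-1}=-\varepsilon(\zeta-1)^{p-2}$$
in $\mathfrak{O}$. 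Consequently $(N-p)a=-\varepsilon(\zeta-1)^{p-2}b$ inside $B$.

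To conclude, I multiply by $p^j$ and apply (\ref{Power and commutator in B}) to absorb $p^j$ into a power of $\zeta-1$:
$$p^j(N-p)a=-\varepsilon\cdot\varepsilon^j(\zeta-1)^{j(p-1)+p-2}b=-\varepsilon^{j+1}(\zeta-1)^{(j+1)(p-1)-1}b.$$
Since $p$ commutes with $N$, the left-hand side equals $Np^ja-p^{j+1}a$, and rearranging gives the claimed formula.

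The main obstacle is spotting the polynomial identity that makes the expansion of $N-p$ collapse so cleanly; once one recognizes that translating $N$ through the $\mathfrak{O}$-action on $B$ amounts to evaluating the cyclotomic factor $(X^p-1)/(X-1)$ at $\zeta$, the remainder is routine bookkeeping with $\varepsilon$ and the power-to-commutator identity (\ref{Power and commutator in B}).
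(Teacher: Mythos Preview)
Your proof is correct and follows essentially the same route as the paper: both expand $N$ in powers of $g-1$, isolate the leading term $p$, and collapse the remaining sum via the cyclotomic relation (you phrase it as $\zeta^p=1$ in $\mathfrak{O}$, the paper as $Nb=0$, which are the same fact), finishing with identity~(\ref{Power and commutator in B}). The only cosmetic difference is the order of operations---you first establish $(N-p)a=-\varepsilon(\zeta-1)^{p-2}b$ and then multiply by $p^j$, whereas the paper carries $p^j$ through from the start---so your argument incidentally covers $j=0$ as well.
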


\begin{proof}
It is readily seen that 
$N=\sum_{i=1}^{p} \binom{p}{i} (g-1)^{i-1}$; thereby
\begin{eqnarray*}
Np^ja&=&\sum_{i=1}^{p} \binom{p}{i}p^j (g-1)^{i-1}a\\
&=& p^{j+1}a+\sum_{i=2}^{p} \binom{p}{i} (g-1)^{i-2}p^jb\\
&=& p^{j+1}a+\sum_{i=2}^{p} \binom{p}{i} \varepsilon^j (g-1)^{j(p-1)+i-2}b \qquad \mbox{(by   (\ref{Power and commutator in B}))}\\
	&=& p^{j+1}a+\varepsilon^j (g-1)^{j(p-1)-1}  \sum_{i=2}^{p} \binom{p}{i} (g-1)^{i-1}b\\
&=& p^{j+1}a+\varepsilon^j (g-1)^{j(p-1)-1}  (N-p)b.
\end{eqnarray*}
To conclude the proof,  observe that
 $Nb=0$ and $pb=\varepsilon(g-1)^{p-1}b$  (observe also that $g$ and $\zeta$ are interchangeable since  they act in the same way on $B$).
\end{proof}

Note that the identity in the latter lemma can be written more concisely as
\begin{equation}
Np^ja=p^{j+1}a-\varepsilon p^j(\zeta-1)^{p-2} b, \qquad \mbox{ for } j\geq 1.
\end{equation}
Before stating the main theorem, note
 that the augmentation ideal $\mathfrak{I}_G$  is an $\mathfrak{O}$-module in the obvious way (since $\mathfrak{I}_G$ is annihilated by the norm $N$); in fact, we have   $\mathfrak{I}_G\cong \mathfrak{O}$ both as $\mathfrak{O}$-modules and  as $\mathbb{Z}_pG$-modules (observe that $g$ acts on $\mathfrak{O}$ as multiplication by $\zeta$).  It follows in particular that multiplying elements of $\mathfrak{I}_G$ by the unit $\varepsilon \in\mathfrak{O}^{\times}$ makes sense.

\begin{theorem}\label{Classification of cyclic modules}
Let $G=\langle g\rangle$ be a cyclic $p$-group of order $p$, and  $A$ be a finite 	$\mathbb{Z}_pG$-module of rank $1$ with invariants $r$, $s$ and $t$.  Set $L=\mathbb{Z}_pG$. Then:
\begin{itemize}
	\item[(i)] There exists a unique submodule $M\subseteq L$ such that $A\cong L/M$;  namely
	\begin{itemize}
		\item[--] $M=L$ if $s=0$,
		\item[--]  $M=\left \langle g-1\, ,\, p^s\right \rangle$ if $r=0$ and $s\geq 1$, and 
			\item[--] 	$M=\left \langle (g-1)^{r+1}\, ,\, p^s-\varepsilon^s(g-1)^{s(p-1)}-t(g-1)^{r}\right \rangle$ if $r,s\geq 1$.
	\end{itemize}

	\item[(ii)] $A$ is completely determined by the invariants $r$, $s$ and $t$; in other words, if $A'$  is  a finite quotient of	$\mathbb{Z}_pG$ having the same invariant as $A$, then $A'\cong A$.
	\item[(iii)] For $r,s\geq 1$,  $A$ is CT if and only if $t\neq 0$.  
	
	(For $rs=0$, $G$ acts trivially on $A$, and so $A$ is CT if and only if  $s=0$, or equivalently $A=0$.)	
\end{itemize}
\end{theorem}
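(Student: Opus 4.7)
The plan is to treat (i) first, then derive (ii), (iii). For (i), existence of $M$ with $A\cong L/M$ is immediate: any generator $a$ of $A$ (which exists because $A$ has rank~$1$) gives a surjection $L\twoheadrightarrow A$, $\ell\mapsto\ell a$, with kernel $M=\operatorname{ann}_L(a)$. Uniqueness of $M$ as a submodule of $L$ follows from $L$ being commutative and local: any isomorphism $L/M\to L/M'$ has the form $\ell+M\mapsto\ell v+M'$ with $v\in L^\times$ (for $M'\subsetneq L$, any generator of $L/M'$ is a unit since $M'\subseteq\mathfrak m$), whose kernel $v^{-1}M'$ must equal $M$; as $vM=M$ in a commutative ring for any unit $v$, this forces $M=M'$. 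The degenerate cases are immediate by inspection: $s=0$ forces $A=0$ by the $p$-group fixed-point theorem, so $M=L$; and $r=0,s\ge1$ makes $A$ a trivial $G$-module cyclic of order $p^s$, for which $M=\langle g-1,p^s\rangle$.

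For the main case $r,s\ge1$, let $M_0$ denote the submodule proposed in the statement. The inclusion $M_0\subseteq M$ is direct: $(g-1)^{r+1}a=(g-1)^rb\in B_r=0$ by (\ref{B as an O-module}), and $ua=p^sa-[\varepsilon^s(g-1)^{s(p-1)-1}+t(g-1)^{r-1}]b=0$ by Lemma~\ref{Formula for the polynomial characterizing p^sa}. For the reverse inclusion, I compare sizes: once $|L/M_0|\le p^{r+s}=|A|$ is established, the surjection $L/M_0\twoheadrightarrow L/M\cong A$ is forced to be an isomorphism. To count $|L/M_0|$, I decompose $L=\mathbb Z_p\cdot 1\oplus\mathfrak I_G$ as $\mathbb Z_p$-modules (splitting the augmentation) and identify $\mathfrak I_G\cong\mathfrak P\subset\mathfrak O$ via $\mathfrak I_G\hookrightarrow L\twoheadrightarrow L/NL\cong\mathfrak O$; this is injective because elements of $NL=\mathbb Z_p N$ have augmentation divisible by $p$, whereas $\mathfrak I_G$ has augmentation zero. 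Under this identification, $L(g-1)^{r+1}=\mathfrak P^{r+1}$, and for $m\in\mathfrak I_G$ the relation $p=\varepsilon(\zeta-1)^{p-1}$ in $\mathfrak O$ yields $mu=p^sm-\varepsilon^s m(\zeta-1)^{s(p-1)}-tm(\zeta-1)^r=-tm(\zeta-1)^r\in\mathfrak P^{r+1}$. Hence $M_0=\mathfrak P^{r+1}+\mathbb Z_pu$ as $\mathbb Z_p$-modules, and projecting $L/M_0$ via the augmentation onto $\mathbb Z_p/p^s\mathbb Z_p$ (kernel $\cong\mathfrak I_G/\mathfrak P^{r+1}$ of order $p^r$) gives $|L/M_0|=p^{r+s}$.

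Part (ii) is then immediate since $M_0$ depends only on $r,s,t$. For part (iii) with $r,s\ge1$, $A$ is CT iff $\hat H^1(G,A)=\ker(N)/[A,G]=0$. Writing $x\in A$ as $ka+y$ with $k\in\mathbb Z/p^s$ and $y\in B$, one has $Nx=kNa$ (since $NB=0$), so $\ker N/B$ vanishes iff $Na$ has order exactly $p^s$, iff $p^{s-1}Na\ne 0$. Lemma~\ref{Cohomology type of A in terms of the norm} with $j=s-1$ (the case $j=0$ follows from the same proof, using the identity $\sum_{i=2}^p\binom{p}{i}(\zeta-1)^{i-2}=-\varepsilon(\zeta-1)^{p-2}$ in $\mathfrak O$) gives $Np^{s-1}a=p^sa-\varepsilon^s(g-1)^{s(p-1)-1}b$; subtracting the expression $p^sa=[\varepsilon^s(g-1)^{s(p-1)-1}+t(g-1)^{r-1}]b$ from Lemma~\ref{Formula for the polynomial characterizing p^sa} leaves $Np^{s-1}a=t(g-1)^{r-1}b$. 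As $(g-1)^{r-1}b$ generates $\mathfrak P^{r-1}/\mathfrak P^r$, which has order~$p$, this is nonzero iff $t\ne 0$. For $rs=0$, $G$ acts trivially on $A$, so $\hat H^0(G,A)=A/pA$ is nontrivial exactly when $s\ge 1$, yielding the stated characterization.

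The main obstacle is the counting in part (i): everything hinges on identifying $\mathfrak I_G$ with $\mathfrak P$, the cancellation $\mathfrak I_G\cdot u\subseteq\mathfrak P^{r+1}$ (which reduces $M_0$ to the transparent sum $\mathfrak P^{r+1}+\mathbb Z_p u$), and the subsequent $\mathbb Z_p$-module counting. Once these are in hand, (ii) and (iii) fall out by short computations with Lemmas~\ref{Formula for the polynomial characterizing p^sa} and~\ref{Cohomology type of A in terms of the norm}.
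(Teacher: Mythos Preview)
Your proof is correct and follows the same overall strategy as the paper: show $M_0\subseteq\ker\varphi$ from the defining relations, bound $|L:M_0|\le p^{r+s}$ via the augmentation filtration to force equality, and read off (iii) from Lemma~\ref{Cohomology type of A in terms of the norm} combined with Lemma~\ref{Formula for the polynomial characterizing p^sa}.

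Two minor technical differences are worth noting. For uniqueness of $M$ you invoke the general fact that over a commutative local ring every isomorphism $L/M\cong L/M'$ is multiplication by a unit, hence $M'=vM=M$; the paper instead shows directly that \emph{every} surjection $L\twoheadrightarrow A$ has kernel equal to the displayed $M$, which is a slightly stronger conclusion obtained as a byproduct. For the index bound you compute $M_0$ explicitly as a $\mathbb Z_p$-module, using the cancellation $\mathfrak I_G\cdot u\subseteq L(g-1)^{r+1}$ (coming from $p^s=\varepsilon^s(\zeta-1)^{s(p-1)}$ in $\mathfrak O$) to reduce to $M_0=L(g-1)^{r+1}+\mathbb Z_p u$ and then count; the paper bounds $|L:\mathfrak I_G+M|\le p^s$ and $|\mathfrak I_G:\mathfrak I_G\cap M|\le p^r$ separately without this reduction. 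Your route in the first point is a clean general principle; the paper's gives marginally more information. In the second point the two are equivalent reorganizations of the same filtration argument.
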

\begin{proof}	
(i) The case $s=0$ is trivial, so suppose that $s\geq 1$. Let $\varphi:L\to A$ be an arbitrary surjective $\mathbb{Z}_pG$-morphism, and let $a=\varphi(1)$.  

Suppose first that $r=0$.  Then $(g-1)a=0$ and $p^sa=0$, which implies that $M\subseteq \ker \varphi$.  Obviously, $\mathfrak{I}_G\subseteq M$, and since $L/\mathfrak{I}_G\cong \mathbb{Z}_p$  and $p^s\in M$ we should have $\vert L:M\vert\leq p^s$.  On the other hand, we know that $A$ has order $p^s$,  so $\vert L:\ker \varphi\vert=p^s$.  It follows that $M= \ker \varphi$.

Suppose now that $r\geq 1$. Using (\ref{B as an O-module}) and   Lemmas \ref{Formula for the polynomial characterizing p^sa} and \ref{Unicity of the invariant t}, we obtain 
$$(g-1)^{r+1}a=0\quad  \mbox{ and } \quad  p^sa=f(g)(g-1)a,$$
where $f(g)=\varepsilon^s(g-1)^{s(p-1)-1}-t(g-1)^{r-1}$.  Thus both $(g-1)^{r+1}$ and $p^s-f(g)(g-1)$ belong to $\ker \varphi$, which shows that $M\subseteq \ker \varphi$.  Denote by $M'$  the submodule of $L$ generated by  $(g-1)^{r+1}$; observe that
$M' \subseteq \mathfrak{I}_G\cap M$. As $\mathfrak{I}_G$ is a free $\mathfrak{O}$-module on the generator $(g-1)$ and $\mathfrak{P}=(g-1)\mathfrak{O}$, we have $\vert \mathfrak{I}_G:M'\vert=p^r$; thus $$\vert \mathfrak{I}_G:\mathfrak{I}_G\cap M\vert \leq p^r.$$
Also,  as $r,s\geq 1$, we have $p^s\in M+\mathfrak{I}_G$, but since $L/\mathfrak{I}_G\cong \mathbb{Z}_p$ the latter implies that $$\vert L:\mathfrak{I}_G+M\vert\leq p^s.$$
The above two inequalities show that
$$\vert L:M\vert=\vert L:\mathfrak{I}_G+M\vert \vert \mathfrak{I}_G:M\vert\leq p^{r+s};$$
but since $M\subseteq \ker \varphi$ and $L/\ker \varphi\cong A$ has order $p^{r+s}$, we obtain $\ker \varphi =M$.

To sum up,  the kernel of every $\mathbb{Z}_pG$-morphism $L\twoheadrightarrow A$ is equal to $M$, which means that  $M$ is the unique submodule of $L$ that satisfies $L/M\cong A$. This concludes the proof of (i).

(ii) If $A'$ has the same invariants as $A$, then,  by writing $A'\cong L/M'$, we see by  (i) that $M'=M$; thus $A'\cong A$.

(iii) Suppose that $r,s\geq 1$.  Clearly, $A$ is non CT if and only if $B<\ker N$ (cf. (\ref{Cohomology groups})), but since $A/B$ is cyclic, the latter amounts to saying that $p^{s-1}a\in \ker N$, that is, $Np^{s-1}a=0$.  By Lemma \ref{Cohomology type of A in terms of the norm}, we have
$$Np^{s-1}a=0 \iff p^{s}a=\varepsilon^{s} (\zeta-1)^{s(p-1)-1}b;$$
by virtue of (\ref{The polynomial characterizing p^sa}) and Lemmas \ref{Formula for the polynomial characterizing p^sa} and \ref{Unicity of the invariant t}, the latter is equivalent to  $t=0$; the result follows.
\end{proof}

The forging theorem allows us in particular to  determine explicitly 
the number of the $\mathbb{Z}_pG$-modules $A$ of rank $1$ and order  $p^n$,  as well as the number of those which are moreover  CT. If we denote these by $\hat{b}_n$  and $\hat{c}_n$, respectively, then:
\begin{equation}\label{Number of finite quotient and finite CT quotients over G cyclic of order p}
\hat{b}_n=\left\{
\begin{array}{ll}
1+p(n-1) \quad \mbox{ if } n\geq 2\\
1 \quad \mbox{ if } n=0,1
\end{array},
\right.
\end{equation}
and 
\begin{equation}\label{Number of finite CT quotients over G cyclic of order p}
\hat{c}_n=\left\{
\begin{array}{ll}
(p-1)(n-1) \quad \mbox{ if } n\geq 2\\
0 \quad  \mbox{ if } n=1\\
1 \quad   \mbox{ if } n=0
\end{array}.
\right.
\end{equation}
Indeed, the case  $n\leq 1$ is trivial, so suppose that $n\geq 2$.  We may write $n=r+s$, where $r$ and $s$ are obviously the possible invariants of  $A$.  Clearly, $s\geq 1$, and so $0\leq r\leq n-1$.  For $r=0$, $A$ is cyclic on which $G$ acts trivially, so we have only one isomorphism type for $A$;  in the remaining cases, each value of $r$  gives rise to $p$ isomorphism types of $A$ corresponding to the values $t=0,\ldots, p-1$; proving (\ref{Number of finite quotient and finite CT quotients over G cyclic of order p}).  Concerning $\hat{c}_n$, observe that the case $r=0$ cannot occur, so $1\leq r\leq n-1$; furthermore, each of the forgoing values of $r$ gives rise to $p-1$ non-isomorphic CT modules $A$ according to the values $t=1,\ldots,p-1$.

\section{The infinite case}
\begin{theorem}\label{Classification of the in finite case}
Let $G=\langle g\rangle$ be a cyclic group of order $p$, and  $A$ be an infinite $\mathbb{Z}_pG$-module of rank $1$.  Set $L=\mathbb{Z}_pG$.  Then the following hold.
\begin{itemize}
	\item[(i)] If $A$ has no $p$-torsion, then  $A\cong L$,  $A\cong \mathfrak{I}_G$ (the augmentation ideal), or $A\cong \mathbb{Z}_p$ (on which $G$ acts trivially).
		\item[(ii)] If $A$ has  non-trivial $p$-torsion, then either
		$A= L/p^nL^G$ or $A= L/(g-1)^n\mathfrak{I}_G$, where $p^n$ is the order of torsion part of $A$.
\end{itemize}
\end{theorem}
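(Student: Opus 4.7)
The plan is to exploit that $A$ is cyclic as a $\mathbb{Z}_pG$-module (since $r(A)=1$), so that $A\cong L/M$ for some $\mathbb{Z}_pG$-submodule $M\subseteq L=\mathbb{Z}_pG$. Because $L$ is finitely generated over $\mathbb{Z}_p$, so is $A$, whence its $\mathbb{Z}_p$-torsion part $T(A)$ is automatically finite, legitimizing the case split in the statement. My strategy is to first enumerate the \emph{pure} $\mathbb{Z}_pG$-submodules of $L$ (those $M$ with $L/M$ torsion-free over $\mathbb{Z}_p$), and then, for a general $M$, to study the $p$-saturation $M':=\{x\in L\mid p^kx\in M\text{ for some }k\geq 0\}$, which is a pure submodule satisfying $M'/M=T(L/M)=T(A)$ and $L/M'\cong A/T(A)$.

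The key step is the classification of pure submodules. I would identify them with $\mathbb{Q}_pG$-submodules of $\mathbb{Q}_pG=L\otimes \mathbb{Q}_p$ via the mutually inverse assignments $M\mapsto M\otimes \mathbb{Q}_p$ and $V\mapsto V\cap L$. Since $\mathbb{Q}_pG$ is semisimple (Maschke), it decomposes via the orthogonal idempotents $e_1=N/p$ and $e_2=1-e_1$ as $\mathbb{Q}_pG=\mathbb{Q}_p\cdot e_1\oplus e_2\mathbb{Q}_pG$, where the second summand is $\mathfrak{I}_G\otimes \mathbb{Q}_p\cong \mathbb{Q}_p(\zeta)$. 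Hence $\mathbb{Q}_pG$ has exactly four submodules, which intersect $L$ in $0$, $L^G=\mathbb{Z}_pN$, $\mathfrak{I}_G$, and $L$, with corresponding quotients $L$, $\mathfrak{I}_G$ (via the isomorphism $x+L^G\mapsto (g-1)x$), $\mathbb{Z}_p$, and $0$. This immediately yields part (i), since $A$ torsion-free and infinite forces $M$ to be pure and distinct from $L$.

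For part (ii), $T(A)\neq 0$ gives $M'\neq 0$ while $A/T(A)\neq 0$ gives $M'\neq L$, so $M'\in \{L^G,\mathfrak{I}_G\}$. If $M'=L^G\cong \mathbb{Z}_p$ (with trivial $G$-action), every $\mathbb{Z}_p$-submodule is a $\mathbb{Z}_pG$-submodule, and the unique submodule of index $p^n$ is $p^nL^G$, so $M=p^nL^G$. If instead $M'=\mathfrak{I}_G\cong \mathfrak{O}$, then the $\mathbb{Z}_pG$-submodules of $M'$ correspond to ideals of the discrete valuation ring $\mathfrak{O}$, namely $\mathfrak{P}^k=(\zeta-1)^k\mathfrak{O}$; since $|\mathfrak{O}/\mathfrak{P}^n|=p^n$, we obtain $M=(g-1)^n\mathfrak{I}_G$, as required.

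The main (and rather mild) obstacle is making the bijection between pure $\mathbb{Z}_pG$-submodules of $L$ and $\mathbb{Q}_pG$-submodules of $\mathbb{Q}_pG$ rigorous: one must verify that $V\cap L$ is always pure and that $M\otimes \mathbb{Q}_p$ recovers $V$, which relies on the flatness of $\mathbb{Q}_p$ over $\mathbb{Z}_p$ together with the $\mathbb{Z}_p$-freeness of $L$. Once this is in place, everything else reduces to elementary enumeration inside $\mathbb{Z}_pN$ and inside the DVR $\mathfrak{O}$.
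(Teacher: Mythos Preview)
Your argument is correct, and in part (ii) it runs essentially parallel to the paper's: both pass to the $p$-saturation $M'$ of $M$ and then identify $M$ inside $M'\in\{L^G,\mathfrak{I}_G\}$ using that $\mathbb{Z}_p$ and $\mathfrak{O}$ are DVRs.

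The genuine difference lies in how part (i) is established. The paper argues intrinsically with the invariant $d(A)$: if $d(A)=p$ then $A\cong L$; otherwise one looks at $[A,G]$, which (being $N$-annihilated) is an $\mathfrak{O}$-module, hence either $0$ or $\cong\mathfrak{O}$, and a short rank count forces $A^G=0$ in the latter case, giving $A\cong\mathfrak{I}_G$. Your route is more structural: you pass to $\mathbb{Q}_pG\cong\mathbb{Q}_p\times\mathbb{Q}_p(\zeta)$ via Maschke, list its four ideals, and intersect back with $L$ to get the four pure submodules $0,\,L^G,\,\mathfrak{I}_G,\,L$. This has the pleasant side effect that you directly know the pure \emph{submodules} (not just the torsion-free quotients up to isomorphism), so in part (ii) the identification of $M'$ is immediate, whereas the paper has to argue separately (via $d(M')$ and torsion-freeness of $L/NL$ and $L/\mathfrak{I}_G$) that $M'$ actually equals $L^G$ or $\mathfrak{I}_G$ rather than merely having the right quotient. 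Conversely, the paper's approach avoids invoking semisimplicity and the bijection between pure sublattices and $\mathbb{Q}_p$-subspaces, keeping everything at the level of $\mathbb{Z}_p$-ranks; your approach would also generalize more transparently to other cyclic $G$ once one knows the Wedderburn factors of $\mathbb{Q}_pG$.
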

\begin{proof}
First,   observe that $d(A)\leq p$ (since $A$ is a quotient of $L$ and  $d(L)=p$).

(i) Suppose that $A$ is torsion-free.  If $d(A)=p$,  then, by writing $A$ as a quotient $L/M$ and using the fact that $\mathbb{Z}_p$ is a principal ideal domain, we see that $d(M)=0$,  so $M=0$ and $A\cong L$.

Next, we may suppose that $d(A)<p$.  If $[A,G]=0$, then by (\ref*{Two formulas for the rank of A}) we have $d(A)=r(A)=1$;  so $A\cong \mathbb{Z}_p$ and  $G$  acts trivially on it.  Suppose now that  $[A,G]\neq 0$.  As $[A,G]$ is annihilated by the norm $N=1+\cdots+g^{p-1}$, we may view it as a module over $\mathfrak{O}$ (the $p$th cyclotomic extension of $\mathbb{Z}_p$) as in the previous section.  It follows immediately that $[A,G]\cong \mathfrak{O}$; thus  $d([A,G])=p-1$, and  $d(A)=p-1$.  Since $A$ has no $p$-torsion and $A/A^G\cong [A,G]$ (induced by multiplication by $g-1$), we should have $d(A^G)=0$, that is, $A^G=0$.  Thus 
$$A\cong [A,G] \cong  \mathfrak{O}\cong \mathfrak{I}_G,$$ (cf. the paragraph just above Theorem \ref{Classification of cyclic modules}),  concluding the proof of (i).

(ii) Let $T=T(A)$ be the $p$-torsion part of $A$, and $p^n$ be its order.   Plainly, $T$ is invariant under the action of $G$, and so it is a $\mathbb{Z}_pG$-submodule of $A$. Observe also that $ \bar{A}=A/T$ has no $p$-torsion, so it is free  over $\mathbb{Z}_p$ and 
$$A\cong T\oplus \bar{A} \quad  (\mbox{over } \mathbb{Z}_p).$$
This shows that $d(A)=d(T)+d( \bar{A})$; in particular, $d( \bar{A})<p$.  Thus  $\bar{A}\cong  \mathfrak{O}$ or 
 $\bar{A}\cong \mathbb{Z}_p$ by virtue of (i).
 
Let $L/M$ be a presentation of $A$, and  let $M'\subseteq L$ be the submodule  corresponding to $T$; so $M'/M \cong T$ and $L/M'\cong \bar{A}$.  If $\bar{A}\cong  \mathfrak{O}$, then $d(L/M')=p-1$ and
$NL\subseteq M'$.  It follows that $d(M')=1$, and so $M'\cong \mathbb{Z}_p$. Note that $L^G=\{\alpha N\mid \alpha\in \mathbb{Z}_p \}$, and so $NL=L^G\cong \mathbb{Z}_p$. Thus the quotient $M'/NL$ should be finite, but since $L/NL\cong \mathfrak{I}_G$ has no $p$-torsion, we see that $M'=NL=L^G$.  This shows that $T=L^G/M$, and so $M$ is the unique submodule of $L^G\cong \mathbb{Z}_p$ of order $p^n$, that is, $M=p^nL^G$. 
Similarly, in the case $\bar{A}\cong  \mathbb{Z}_p$, we have $d(L/M')=1$ and
$\mathfrak{I}_G\subseteq M'$.  It follows that $d(M')=p-1$.  As $d(\mathfrak{I}_G)=p-1$,
the quotient $M'/\mathfrak{I}_G$ should be finite, but since $L/\mathfrak{I}_G\cong \mathbb{Z}_p$ has no $p$-torsion, we see that $M'=\mathfrak{I}_G$.  This shows that $T=\mathfrak{I}_G/M$, and so $M$ is the unique submodule of $\mathfrak{I}_G\cong \mathfrak{O}$ of order $p^n$, that is, $M=(g-1)^n \mathfrak{I}_G$, as claimed.
\end{proof}

%\bibliography{NAC}

\begin{thebibliography}{10}
\bibitem{AliPowerfulCT}
A. Abdollahi,  Powerful $p$-groups have noninner automorphisms
of order $p$ and some cohomology, \emph{J. Algebra} \textbf{323}  (2010) 779-789
\bibitem{AliCT}
A. Abdollahi, Cohomologically trivial modules over finite
groups of prime power order, \emph{J. Algebra} \textbf{342} (2011) 154-160 	
	
\bibitem{AMY} A. Abdollahi, M. Guedri, and Y. Guerboussa, Non-triviality of Tate cohomology for certain classes of finite $p$-groups, \emph{Comm. Algebra.}, (2017).	
	
\bibitem{BG}	 
B. Bahri and Y. Guerboussa,  Discrete valuation rings, partitions and $p$-groups I, 
https://doi.org/10.48550/arXiv.2303.06454
 \bibitem{Benmoussa}
M. T. Benmoussa and Y. Guerboussa, Some properties of semi-abelian $p$-groups, {\em Bull. Aust. Math. Soc.} \textbf{91} (2015), 86-91

\bibitem{Bushnell and Reiner}	
C. Bushnell and I. Reiner, Zeta functions of arithmetic orders and Solomon's conjectures, \textit{Math. Zeit.}, \textbf{173} (1980), 135-161.




\bibitem{GAP}
The GAP Group, GAP  Groups, Algorithms, and Programming, Version 4.4.12, www.gap-system.org, 2008.

\bibitem{GrunSegalSmith}
F. J. Grunewald, D. Segal, and G. C. Smith, Subgroups of finite index in nilpotent groups, \emph{Invent. Math.}, \textbf{93} (1988), 185--223

\bibitem{MY} 
M. Guedri and Y. Guerboussa, On cohomologically trivial modules over finite $p$-groups, preprint.

\bibitem{MO}
Y. Guerboussa,  Cohomologically trivial modules over finite $p$-groups. (MathOverflow), \url{https://mathoverflow.net/q/477539}  (version: 2024-08-24)






\bibitem{LuboSegal}
A. Lubotzky and D. Segal. \textit{Subgroup growth}.  Birkhäuser Verlag, 2003.








\bibitem{Reiner1980}
I. Reiner, Zeta functions of integral representations, \textit{Comm. Algebra}, \textbf{8} (1980), 911--925.





\bibitem{Schmid}
P. Schmid, A cohomological property of regular $p$-groups, {\em Math. Z.} \textbf{175} (1980) 1-3.



\bibitem{SerreLF}
Serre, Jean-Pierre,  Corps locaux,  Hermann, 1963.



\bibitem{Solomon}
L. Solomon, Zeta functions and integral representation theory. \textit{Advances in Math.}, \textbf{26}, (1977) 306--326






\end{thebibliography}
%\bibliographystyle{abbrv}

\end{document}